\newtheorem{theorem}{Theorem}[section]
\newtheorem{lemma}[theorem]{Lemma}
\theoremstyle{definition}
\newtheorem{definition}[theorem]{Definition}
\theoremstyle{remark}
\newtheorem{remark}[theorem]{Remark}
\numberwithin{equation}{section}
\newcommand{\Z}{{\mathbb Z}}
\newcommand{\T}{{\mathbb T}}
\newcommand{\N}{{\mathbb N}}
\newcommand{\R}{{\mathbb R}}
\newcommand{\bk}{\mathbf{k}}
\newcommand{\dbar}{{\textstyle d^{\hspace*{-0.4em}-}\hspace*{-0.15em}}}
\newcommand{\supp}{\mathop{\textrm{supp}}}
\newcommand{\op}{\mathop{\textrm{\upshape op}}}
\newcommand{\id}{\mathop{\textrm{\upshape id}}\nolimits}
\renewcommand{\epsilon}{\varepsilon}
\renewcommand{\tilde}[1]{\widetilde{#1}}
\begin{document}

\title[Operator-valued pseudodifferential operators on the torus]{Mapping properties for  operator-valued pseudodifferential operators on toroidal Besov spaces}


\author[B. Barraza Mart\'{i}nez]{Bienvenido Barraza Mart\'{i}nez}
\address{B. Barraza Mart\'{i}nez, Universidad del Norte, Departamento de Matem\'aticas,  Barranquilla (Colombia)}
\email{bbarraza@uninorte.edu.co}

\author[R. Denk]{Robert Denk}
\address{R.\ Denk, Universit\"at Konstanz, Fachbereich f\"ur Mathematik und Statistik, Konstanz (Germany)}
\email{robert.denk@uni-konstanz.de}

\author[J. Hern\'andez Monz\'on]{Jairo Hern\'andez Monz\'on}
\address{J. Hern\'andez Monz\'on, Universidad del Norte, Departamento de Matem\'aticas,  Barranquilla (Colombia)}
\email{jahernan@uninorte.edu.co}

\author[M. Nendel]{Max Nendel}
\address{M.\ Nendel, Universit\"at Konstanz, Fachbereich f\"ur Mathematik und Statistik, Konstanz (Germany)}
\email{max.nendel@uni-konstanz.de}

\thanks{The authors would like to thank COLCIENCIAS (Project 121556933488) and DAAD for the financial support.}

\begin{abstract}
In this paper, we consider  pseudodifferential operators on the torus with
operator-valued symbols and prove continuity properties on vector-valued toroidal
Besov spaces, without assumptions on the
underlying Banach spaces. The symbols are of limited smoothness with respect to $x$ and satisfy a finite number of estimates on the discrete derivatives. The proof of the main result is based on a description of the operator as a convolution operator with a kernel representation which is related to the dyadic decomposition appearing in the definition of the Besov space.
\end{abstract}

\keywords{Pseudodifferential operators, vector-valued Besov spaces, convolution kernels}

\subjclass{35S05, 47D06, 35R20}

\

\date{June 22, 2017}
\maketitle

\section{Introduction}

In this note, we consider mapping properties of pseudodifferential operators on the $n$-dimensional torus $\T^n = (\R/2\pi\Z)^n$ in vector-valued Besov spaces. Toroidal pseudodifferential operators are defined and investigated, e.g., in the monograph \cite{Ruzhansky-Turunen10} by Ruzhansky and Turunen. Here, the group structure of $\T^n$ is used to define a global quantization with covariable $\bk \in\Z^n$ (Fourier series). This quantization is also the basis for the definition of the Besov spaces on the torus by means of a dyadic decomposition of $\Z^n$ (see Definition~\ref{2.5} below). Compared to the other possible approach where $\T^n$ is treated as a closed manifold, one has the advantage of a global quantization without the necessity to introduce local coordinate charts. The theory of pseudodifferential operators on the torus was developed by Agranovich \cite{agranovich79}, McLean \cite{mclean91}, Melo \cite{melo97}, Bu-Kim \cite{Bu-Kim04}, \cite{Bu-Kim05a} and others.

Mapping properties of toroidal pseudodifferential operators in $L^p$-spaces were studied studied by Delgado \cite{delgado13}, Molahajloo-Shahla-Wong \cite{molahajloo-shahla-wong10}, Wong \cite{wong11}, Cardona \cite{Cardona17a} and others. In particular, in Cardona \cite{Cardona17a} mapping properties in Besov and H\"older spaces are shown. The global quantization approach mentioned above can be generalized to compact Lie groups, see Ruzhansky-Turunen \cite{Ruzhansky-Turunen13}, Ruzhansky-Turunen-Wirth \cite{Ruzhansky-Turunen-Wirth14}, Cardona \cite{Cardona17} and references therein.

The above references deal with  the scalar-valued case. In the situation where the considered functions have values in some Banach space $E$, the situation depends on the geometric properties of $E$. If $E$ is a UMD space (and hence in particular reflexive), then Mikhlin-type results yield $L^p$-boundedness, see Arendt-Bu \cite{Arendt-Bu04}, Keyantuo-Lizama-Poblete \cite{Keyantuo-Lizama-Poblete09},  Barraza-Gonz\'alez-Hern\'andez \cite{Barraza-Gonzalez-Hernandez15}. The case of general Banach spaces was studied by Amann  \cite{Amann97} on $\R^n$ and by Denk-Barraza-Hern\'{a}ndez-Nau \cite{Barraza-Denk-Hernandez_Nau16} on $\T^n$. While in \cite{Barraza-Denk-Hernandez_Nau16} only pseudodifferential operators with  $x$-independent symbols (Fourier multipliers) were studied, in the present note we investigate $x$-dependent vector-valued symbols with values in a general Banach space.

We consider pseudodifferential operators whose symbols have limited smoothness with respect to $x$ and satisfy a finite number of growth conditions in analogy to the conditions of H\"ormander. The symbols have values in $L(E)$, the space of all bounded linear operators in $E$, where $E$ stands for an arbitrary Banach space. The main result (Theorem~\ref{3.3}) states that the pseudodifferential operator $\op[a]$ related to the symbol $a$ of order $m$ induces a bounded linear operator from $B_{pq}^{s+m}(\T^n, E)$ to $B_{pq}^s(\T^n,E)$, where the range of $s$ is in a natural way restricted by the smoothness of $a$ and where $p,q\in[1,\infty]$. One of the main steps in the proof consists of a description of the operators $\op[a]\op[\phi_j]$ and $\op[\phi_j]\op[a]$ as convolution operators (see Lemma~\ref{2.6}). Here $(\phi_j)_{j\in\N_0}$ is a dyadic decomposition of $\Z^n$, and the kernels of these operators can be written in form of an infinite sum adapted to this dyadic decomposition. This allows  to avoid oscillatory integrals and sum-integrals. We note that this approach gives a new proof of the Besov space continuity even in the $x$-independent case (cf. \cite{Barraza-Denk-Hernandez_Nau16}), and therefore it may serve as a basis for future generalizations to locally compact abelian groups and to compact Lie groups (see also Remark~\ref{3.4} a)). Both the mapping properties and the convolution kernel description can be used to show generation of analytic semigroups for parabolic pseudodifferential operators on the torus. This will be the content of a subsequent paper.

\section{Kernel estimates for toroidal pseudodifferential operators}

In the following, let $E$ be a Banach space with norm $\|\cdot\|$. Throughout this paper, we fix $n\in\N$, $\rho\in\N$ with $\rho\ge n+1$, $r\in [0,\infty)$ and $m\in\R$. We consider operator-valued pseudodifferential operators on the $n$-dimensional torus $\T^n = (\R/(2\pi\Z))^n$, where we use $[-\pi,\pi]^n$ as a set of representatives. Note that in this case, the euclidian norm $|x|$ of a representative equals the distance of $x$ to $0$ in the metric on $\T^n$. We use standard notation for smooth vector-valued functions $f\in C^\infty(\T^n,E)$ and their Fourier series (discrete Fourier transform)
\[ (\mathscr Ff)(\bk) := \hat f(\bk) := \int_{\T^n} e^{-ik\cdot x} f(x)\dbar x \quad (\bk\in\Z^n),\]
where $\dbar x := (2\pi)^{-n} dx$. The Fourier transform is extended by duality to the space of vector-valued toroidal distributions $u\in \mathscr D'(\T^n,E) := L(C^\infty(\T^n),E)$, see \cite{Barraza-Denk-Hernandez_Nau16}, Section~2 for more details.

The symbol class on the torus is defined with help of the discrete derivatives (differences) $\Delta_\bk^\alpha$. For this, let $j\in\{1,\dots,n\}$, and let $\delta_j :=(\delta_{jk})_{k=1,\dots,n}$ be the $j$-th unit vector in $\R^n$. For $a\colon\Z^n\to E$ and $\alpha\in\N_0^n$, we set
\begin{align*}
  \Delta_{k_j} a(\bk) & := a(\bk +\delta_j) - a(\bk) \quad (\bk\in\Z^n),\\
  \Delta_{\bk}^\alpha & := \Delta_{k_1}^{\alpha_1}\ldots\Delta_{k_n}^{\alpha_n}.
\end{align*}
We refer to \cite{Ruzhansky-Turunen10}, Sect. 3.3.1, for a more detailed discussion of the discrete analysis on the torus. In the following definition, we set $\langle\bk\rangle := (1+|\bk|^2)^{1/2}\;(\bk\in\Z^n)$.

\begin{definition}\label{2.1}
a) Let $S^{m,\rho,r}:=S^{m,\rho,r}\left(  \mathbb{T}%
^{n}\times\mathbb{Z}^{n},L(E)\right)  $ be the set of all
functions $a:\mathbb{T}^{n}\times\mathbb{Z}^{n}\longrightarrow L(E)$ such that
$\left[  x\longmapsto a\left(  x,\mathbf{k}\right)  \right]  \in C^{r}\left(
\mathbb{T}^{n},L(E)\right)  $ for all $\mathbf{k}\in\mathbb{Z}^{n}$, and
$\left\Vert a\right\Vert _{m}^{\left(  \rho,r\right)  }<\infty$. Here, in the case $r\in\N_0$ we
define
\[
\left\Vert a\right\Vert _{m}^{\left(  \rho,r\right)  }:=\max_{\left\vert
\alpha\right\vert \leq\rho}\max_{\left\vert \beta\right\vert \leq r}\sup
_{x\in\mathbb{T}^{n}}\sup_{\mathbf{k}\in\mathbb{Z}^{n}}\left\langle
\mathbf{k}\right\rangle ^{\left\vert \alpha\right\vert -m}\Vert\Delta
_{\mathbf{k}}^{\alpha}\partial_{x}^{\beta}a(x,\mathbf{k}))\Vert_{L(E)}\text{,}%
\]
and in the case $r\in (0,\infty)\setminus\N$ we define
\begin{align*}
\left\Vert a\right\Vert _{m}^{\left(  \rho,r\right)  } & :=\left\Vert
a\right\Vert _{m}^{\left(  \rho,\left\lfloor r\right\rfloor \right)  }\\%
& +\max_{\substack{\left\vert \alpha\right\vert \leq\rho\\\left\vert
\beta\right\vert =\left\lfloor r\right\rfloor }}\sup_{\substack{x,y\in
\mathbb{T}^{n}\\x\neq y}}\sup_{\mathbf{k}\in\mathbb{Z}^{n}}\left\langle
\mathbf{k}\right\rangle ^{\left\vert \alpha\right\vert -m}\frac{\Vert
\Delta_{\mathbf{k}}^{\alpha}\partial_{x}^{\beta}a(x,\mathbf{k}))-\Delta
_{\mathbf{k}}^{\alpha}\partial_{y}^{\beta}a(y,\mathbf{k}))\Vert_{L(E)}%
}{\left\vert x-y\right\vert ^{r-\left\lfloor r\right\rfloor }}.
\end{align*}

b)  For $a\in S^{m,\rho,r}$  the pseudo-differential
operator $\op[a]$ is defined by%
\begin{equation}
  \label{2-1}
  (\op[a]f)(x) = \sum_{\bk\in\Z^n} e^{i\bk\cdot x} a(x,\bk) \hat f(\bk)\quad (f\in C^\infty(\T^n,E),\, x\in\T^n).
\end{equation}
\end{definition}

\begin{remark}
  \label{2.2}
  a) It is easily seen that for $f\in C^\infty(\T^n,E)$ we have $(\hat f(\bk))_{\bk\in\Z^n}\in\mathscr S(\Z^n,E)$, where $\mathscr S(\Z^n,E)$ stands for the Schwartz space of all functions $\phi\colon \Z^n\to E$ with $\sup_{\bk\in\Z^n} \langle \bk\rangle^N \|\phi(\bk)\|<\infty$ for all $N\in\N$ (see, e.g., \cite{Barraza-Denk-Hernandez_Nau16}, Lemma~2.2). Therefore, the sum in \eqref{2-1} converges absolutely.

  b) Inserting the definition of $\hat f(\bk)$ into the right-hand side of \eqref{2-1}, we formally get
  \begin{equation}
    \label{2-2}
    \begin{aligned}
      (\op[a]f)(x) & = \sum_{\bk \in\Z^n} \int_{\T^n} e^{i\bk\cdot(x-y)} a(x,\bk) f(y)\dbar y \\
      & = \sum_{\bk\in\Z^n}\int_{\T^n} e^{i\bk y} a(x,\bk) f(x-y)\dbar y.
    \end{aligned}
  \end{equation}
  However, this sum-integral does not converge in general. To make such integrals convergent (and to change the order of integration and summation), one has to use either oscillatory sum-integrals (see \cite{Barraza-Denk-Hernandez_Nau16}, Remark~3.4) or use integration by parts (see \cite{Ruzhansky-Turunen10}, Remark~4.1.18). In the cases considered below, the symbols will be good enough to guarantee absolute convergence of the sum-integrals.
\end{remark}

The definition of toroidal Besov spaces is based on a dyadic decomposition in the covariable space $\Z^n$. We use the following definition.

\begin{definition}
  \label{2.3} A sequence $(\varphi_j)_{j\in\N_0}\subset\mathscr S(\Z^n)$ is called a dyadic decomposition if the following conditions are satisfied.
  \begin{enumerate}
    [(i)]
    \item We have $\supp\varphi_0\subset\{\bk\in\Z^n: |\bk|\le 2\}$ and $\supp\varphi_j \subset\{\bk\in\Z^n: 2^{j-1}\le|\bk|\le 2^{j+1}\}$ for $j\in\N$.
    \item For each $\bk\in\Z^n$, we have $0\le \varphi_j(\bk)\le 1\;(j\in\N_0)$ and $\sum_{j\in\N_0} \varphi_j(\bk) = 1$.
    \item For each $\alpha\in\N_0^n$, exists a constant $c_\alpha>0$ independent of $j$ and $\bk$ such that
        \[ |\Delta_\bk^\alpha \varphi_j(\bk)| \le c_\alpha \langle \bk\rangle^{-|\alpha|}\quad (j\in\N,\, \bk\in\Z^n).\]
  \end{enumerate}
\end{definition}

\begin{remark}
  \label{2.4}
  A partition of unity on $\Z^n$ can be obtained as a restriction of a partition of unity on $\R^n$ in the sense of \cite{Barraza-Denk-Hernandez_Nau16}, Definition~3.5, or \cite{Amann97}, Section~4. Here, the definition of a partition of unity $(\tilde\varphi_j)_{j\in\N_0}$ on $\R^n$ includes the condition
  \[ |\partial_\xi^\alpha \tilde\varphi_j(\xi)|\le c_\alpha 2^{-j|\alpha|}\quad (\xi\in\R^n).\]
  Taking $\varphi_j:=\tilde\varphi_j|_{\Z^n}$, we obtain condition~\ref{2.3} (iii) by
  \cite{Ruzhansky-Turunen10}, proof of Theorem~II.4.5.3, which states that for each $\mathbf{k}%
\in\mathbb{Z}^{n}$,
\[
\Delta_{\mathbf{k}}^{\gamma}\varphi_{j}(\mathbf{k})=\partial_{\xi}^{\gamma
}\tilde\varphi_{j}(\xi)\big|_{\xi=\tilde{\xi}}%
\]
with some $\tilde{\xi}\in\lbrack k_{1},k_{1}+\gamma_{1}]\times\ldots
\times\lbrack k_{n},k_{n}+\gamma_{n}]$. This implies
\begin{equation}
|\Delta_{\mathbf{k}}^{\gamma}\varphi_{j}(\mathbf{k})|\leq  C\langle
\mathbf{k}\rangle^{-|\gamma|}\quad(j\in\mathbb{N},\,\mathbf{k}\in
\mathbb{Z}^{n}) \label{2-3}%
\end{equation}
using the conditions on the support of $\varphi_j$.
\end{remark}

Throughout the following, we will fix a dyadic decomposition $(\varphi_j)_{j\in\N_0}\subset\mathscr S(\Z^n)$. We set $\varphi_{-1}:=0$ and define
\[ \chi_j := \varphi_{j-1} + \varphi_j + \varphi_{j+1} \quad (j\in\N_0).\]
Then $\chi_j=1$ on $\supp\varphi_j$, i.e., we have $\varphi_j\chi_j = \varphi_j$ for all $j\in\N_0$.

\begin{definition}
  \label{2.5} For $p,q\in[1,\infty]$ and $s\in\R$, the Besov space $B_{pq}^s(\T^n,E)$ is defined as the space of all $u\in\mathscr D'(\T^n,E)$ with $\|u\|_{B_{pq}^s(\T^n,E)}<\infty$, where
  \[ \|u\|_{B_{pq}^s(\T^n,E)} := \Big\| \big( 2^{js} \|\op[\varphi_j]u\|_{L^p(\T^n,E)}\big)_{j\in\N_0}\Big\|_{\ell^q(\N_0)}.\]
\end{definition}

For properties of vector-valued Besov spaces on the torus, we refer to \cite{Barraza-Denk-Hernandez_Nau16}, Remark~3.9. For the analog spaces in $\R^n$, see \cite{Amann97}, Section~5. The Besov space does not depend on the choice of the dyadic decomposition (in the sense of equivalent norms).

The estimates for pseudodifferential operators on toroidal Besov spaces below are based on their representation as integral operators and estimates for their kernels. We adapt this representation to the dyadic decomposition and obtain better convergence properties. In particular, there is no need to consider oscillatory sum-integrals.

\begin{lemma}
  \label{2.6}
  Let $a\in S^{m,\rho,r}$, and let $f\in C^\infty(\T^n,E)$.

  a) We have
  \[ (\op[a]f)(x) = \sum_{\kappa\in\N_0} (\op[a]\op[\varphi_\kappa] f)(x)\quad (x\in\T^n).\]
  Here, the series on the right-hand side converges in $C(\T^n,E)$ (i.e., uniformly in $x$).

  b) For every $x\in\T^n$ and $j\in\N_0$,
  \[ \big(\op[a]\op[\varphi_j]f\big)(x) = \int_{\T^n} K_j(x,y)f(x-y)\dbar y ,\]
  where
  \begin{equation}
    \label{2-4}
    K_j(x,y) := \sum_{\bk\in\Z^n} e^{i\bk\cdot y} a(x,\bk) \varphi_j(\bk).
  \end{equation}
  (Note that this is a finite sum.)

  c) For every $x\in\T^n$ and $j\in\N_0$,
  \[ \big(\op[a]\op[\varphi_j] f\big)(x) = \sum_{\kappa\in\N_0}\Big[ \int_{\T^n}\int_{\T^n} K_{j\kappa}^{(1)} (x,y,z) (\op[\chi_\kappa]f)(x-y-z)\dbar y\dbar z\Big],\]
  where
  \[ K_{j\kappa}^{(1)} (x,y,z) := \sum_{\bk,\ell\in\Z^n} e^{i\ell\cdot y} e^{i\bk\cdot z} \varphi_j(\ell) \varphi_\kappa(\bk) a(x,\bk).\]

  d) For every $x\in\T^n$ and $j\in\N_0$,
  \[ \big(\op[\varphi_j] \op[a]f\big)(x) = \sum_{\kappa\in\N_0}\Big[ \int_{\T^n}\int_{\T^n} K_{j\kappa}^{(2)} (x,y,z) (\op[\chi_\kappa]f)(x-y-z)\dbar y\dbar z\Big],\]
  where
  \[ K_{j\kappa}^{(2)} (x,y,z) := \sum_{\bk,\ell\in\Z^n} e^{i\ell\cdot y} e^{i\bk\cdot z} \varphi_j(\ell) \varphi_\kappa(\bk) a(x-y,\bk).\]

The series over $\kappa$ in c) and d) converge in $C(\T^n,E)$, the sums over $\bk$ and $\ell$ are finite.
\end{lemma}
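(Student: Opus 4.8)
The plan is to establish the four statements in sequence, since each builds on the previous ones, with the core analytic input being the absolute convergence furnished by Remark~\ref{2.2}a) together with the rapid decay of $(\hat f(\bk))_\bk\in\SZE$ and the symbol estimates in Definition~\ref{2.1}a).

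For part a), I would first observe that $\op[\ph_\kappa]f$ has Fourier coefficients $\ph_\kappa(\bk)\hat f(\bk)$, so by condition~\ref{2.3}(ii) we have $\sum_{\kappa}\ph_\kappa(\bk)\hat f(\bk)=\hat f(\bk)$ pointwise in $\bk$. Applying $\op[a]$ and using that $\op[a]$ acts coefficientwise through $e^{i\bk\cdot x}a(x,\bk)$, the claim reduces to interchanging the sum over $\kappa$ with the sum over $\bk$ in \eqref{2-1}. To justify this interchange and the uniform convergence, I would bound $\|a(x,\bk)\|_{L(E)}\le\|a\|_m^{(\rho,r)}\langle\bk\rangle^m$ (the $\alpha=\beta=0$ case of the symbol norm), and use $0\le\ph_\kappa(\bk)\le1$ together with the Schwartz decay $\|\hat f(\bk)\|\le C_N\langle\bk\rangle^{-N}$; choosing $N>m+n$ makes $\sum_{\bk}\langle\bk\rangle^{m}\|\hat f(\bk)\|$ finite, so the double sum converges absolutely and uniformly in $x$, permitting Fubini/Tonelli for series.

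For part b), I would insert $\widehat{\op[\ph_j]f}(\bk)=\ph_j(\bk)\hat f(\bk)$ into \eqref{2-1}, giving $(\op[a]\op[\ph_j]f)(x)=\sum_{\bk}e^{i\bk\cdot x}a(x,\bk)\ph_j(\bk)\hat f(\bk)$. Writing $\hat f(\bk)=\int_{\T^n}e^{-i\bk\cdot w}f(w)\dbar w$ and substituting $y=x-w$, the sum over $\bk$ is finite by condition~\ref{2.3}(i), so the integral and finite sum may be freely interchanged, yielding exactly $\int_{\T^n}K_j(x,y)f(x-y)\dbar y$ with $K_j$ as in \eqref{2-4}. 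For part c), I would apply the identity $f=\sum_\kappa\op[\chi_\kappa]\ldots$ is not quite right; instead I would use that $\op[\ph_j]f=\op[\ph_j]\op[\chi_\kappa]f$ whenever $\supp\ph_j$ meets $\supp\ph_\kappa$, exploiting $\ph_j\chi_j=\ph_j$ so that inserting the full decomposition $\sum_\kappa\op[\chi_\kappa]$ only the index $\kappa$ with overlapping support survives on the Fourier side. Concretely I would start from part b), expand $f$ via $\hat f(\bk)=\sum_\kappa\ph_\kappa(\bk)\hat f(\bk)$ inside $K_j$ (using~\ref{2.3}(ii)), then reconstruct the double convolution by writing each factor $\ph_j(\ell)$ and $\ph_\kappa(\bk)$ as its own Fourier kernel and performing the substitutions that separate the $y$ and $z$ integrations. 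Part d) is analogous, the only difference being that in $\op[\ph_j]\op[a]$ the symbol $a$ is evaluated at the shifted point $x-y$, which appears because $\op[\ph_j]$ acts on the $x$-variable of $(\op[a]\,\cdot\,)(x)$ and the kernel $K_j$ for $\ph_j$ convolves in that variable; tracking the shift through the substitution produces $a(x-y,\bk)$.

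The main obstacle I expect is the justification of convergence of the $\kappa$-series in c) and d) in $C(\T^n,E)$, rather than the formal manipulations. The individual double integrals are well-defined since the $\bk,\ell$-sums are finite, but summing over all $\kappa$ requires a quantitative estimate. Here I would again invoke the Schwartz decay of $\hat f$: each term is controlled by $\|\op[\chi_\kappa]f\|_\infty$ times a bound on $K^{(i)}_{j\kappa}$, and since $(\op[\chi_\kappa]f)$ inherits rapid decay in $\kappa$ from $f\in\CTE$ (its Fourier coefficients being supported on the dyadic annulus of radius $\sim2^\kappa$ where $\hat f$ is already $O(2^{-\kappa N})$), the series converges uniformly and absolutely, with the finiteness of the $\bk,\ell$-sums ensuring each summand is continuous in $x$. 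This decay estimate for $\op[\chi_\kappa]f$, combined with the uniform symbol bound $\langle\bk\rangle^m$, is the technical heart that makes the kernel representation rigorous and dispenses with oscillatory sum-integrals.
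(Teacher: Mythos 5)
Your proposal is correct in substance and follows essentially the same route as the paper's proof: part a) by absolute convergence of the double sum over $(\kappa,\bk)$ using $\|a(x,\bk)\|_{L(E)}\le \|a\|_m^{(\rho,r)}\langle\bk\rangle^m$ and the rapid decay of $\hat f$; part b) by inserting the Fourier integral for $\hat f$ into a finite sum; parts c) and d) by inserting the partition of unity and composing two applications of b). Two remarks are in order. First, your preliminary claim in c), that $\op[\ph_j]f=\op[\ph_j]\op[\chi_\kappa]f$ whenever $\supp\ph_j$ meets $\supp\ph_\kappa$, is false: e.g.\ for $\kappa=j+1$, at points $\bk$ with $2^{j-1}<|\bk|<2^j$ and $\ph_{j-1}(\bk)>0$ one has $\chi_{j+1}(\bk)=\ph_j(\bk)=1-\ph_{j-1}(\bk)<1$, so $\ph_j\chi_{j+1}\neq\ph_j$. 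The identity that actually drives the reconstruction is the equal-index one, $\ph_\kappa\chi_\kappa=\ph_\kappa$: after expanding $\hat f(\bk)=\sum_\kappa\ph_\kappa(\bk)\hat f(\bk)$ one replaces $\ph_\kappa(\bk)\hat f(\bk)$ by $\ph_\kappa(\bk)\,\mathscr F\big(\op[\chi_\kappa]f\big)(\bk)$, which is exactly what the paper writes in operator form as $\op[a]\op[\ph_j]f=\sum_{\kappa}\op[a]\op[\ph_\kappa]\op[\ph_j]\op[\chi_\kappa]f$, followed by b) applied with symbol $a$ and multiplier $\ph_\kappa$, and then b) again with the constant symbol $\id_E$ and multiplier $\ph_j$. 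Since your concrete plan proceeds via this expansion and never actually uses the false claim (you invoke the correct identity $\ph_j\chi_j=\ph_j$), this is a slip rather than a gap, but it should not survive into a final write-up. Second, for the convergence in $C(\T^n,E)$ of the $\kappa$-series in c) and d), the paper gets it for free: the series in c) is term by term equal to the series of a) applied to $\op[\ph_j]f$, and the series in d) is $\op[\ph_j]$ (bounded on $C(\T^n,E)$) applied term by term to the series of a). Your direct estimate, pairing a crude bound $\|K_{j\kappa}^{(i)}(x,y,z)\|_{L(E)}\le C_j\,2^{\kappa(n+m)}$ (lattice-point counting on $\supp\ph_\kappa$, where $\langle\bk\rangle\sim 2^\kappa$) with the rapid decay $\|\op[\chi_\kappa]f\|_{L^\infty(\T^n,E)}\le C_N 2^{-\kappa N}$, is also valid and self-contained, just more work than necessary.
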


\begin{proof}
  a) Because of $\sum_{\kappa\in\N_0}\varphi_\kappa = 1$, we obtain
  \begin{equation}
    \label{2-5}
    (\op[a]f)(x) =\sum_{\bk\in\Z^n} e^{ix\cdot\bk} a(x,\bk) \hat f(k) = \sum_{\bk\in\Z^n} \Big( \sum_{\kappa\in\N_0} e^{ix\cdot\bk} a(x,\bk) \varphi_\kappa(\bk)\hat f(\bk)\Big).
  \end{equation}
  For every $\bk\in\Z^n$, there are at most three $\kappa\in\N_0$ with $\varphi_\kappa(\bk)\not=0$. This and $\varphi_\kappa\le 1$ yield
  \begin{align*}
    \sum_{\bk\in\Z^n,\,\kappa\in\N_0} \big\| & a(x,\bk)\varphi_\kappa(\bk)\hat f(\bk)\big\| \le 3\sum_{\bk\in\Z^n} \| a(x,\bk)\|_{L(E)} \|\hat f(\bk)\|_E \\
    & \le C \sum_{\bk\in\Z^n} \langle \bk\rangle^m \|\hat f(\bk)\| <\infty.
  \end{align*}
  In the last step, we have used $(\hat f(\bk))_{\bk\in\Z^n} \in \mathscr S(\Z^n,E)$. Therefore, the series in \eqref{2-5} converges in $C(\T^n,E)$, and we may change the order of summation which yields a).

  b) This follows from
  \begin{align*}
    (\op[a]\op[\varphi_j]f)(x) & = \int_{\bk\in\Z^n} e^{i\bk\cdot x} a(x,\bk) \varphi_j(\bk)\hat f(\bk) \\
    & = \sum_{\bk\in\Z^n}\Big[ \int_{\T^n} e^{i\bk\cdot x} a(x,\bk) \varphi_j(\bk) e^{-i\bk z} f(z)\dbar z\Big]\\
    & = \sum_{\bk\in\Z^n} \Big[\int_{\T^n} e^{i\bk\cdot y} a(x,\bk) \varphi_j(\bk) f(x-y)\dbar y\Big]\\
    & = \int_{\T^n} K_j(x,y) f(x-y)\dbar y.
  \end{align*}
  Note that the sum is finite, and therefore we may change the order of summation and integration.

  c) We use $\varphi_\kappa\chi_\kappa=\varphi_\kappa$ and $\op[\varphi_j]\op[\varphi_\kappa]=\op[\varphi_\kappa]\op[\varphi_j]$ and apply a) to get
  \[ \op[a]\op[\varphi_j]f = \sum_{\kappa\in\N_0}\op[a]\op[\varphi_\kappa]\op[\varphi_j]\op[\chi_\kappa] f.\]
  Here, the sum on the right-hand side converges in $C(\T^n,E)$ due to a).
  Applying b), we see that
  \[ \big(\op[a]\op[\varphi_\kappa]\op[\varphi_j]\op[\chi_\kappa]f\big)(x) = \int_{\T^n} K_\kappa(x,z)\big(\op[\varphi_j]\op[\chi_\kappa] f\big)(x-z)\dbar z\]
  with $K_\kappa$ being defined in \eqref{2-4}. Another application of b) with $a$ being replaced by the constant symbol $(x,\bk)\mapsto \id_E$ gives
  \[ \big(\op[\varphi_j]\op[\chi_\kappa]f\big)(x) = \int_{\T^n} \tilde K_j(y)\big(\op[\chi_\kappa]f\big)(x-y)\dbar y\]
  with $\tilde K_j(y) := \sum_{\ell\in\Z^n} e^{i\ell\cdot y} \varphi_j(\ell)$. Altogether we obtain
  \[ \big(\op[a]\op[\varphi_j]f\big)(x) = \sum_{\kappa\in\N_0} \int_{\T^n}\int_{\T^n} K_{j\kappa}^{(1)}(x,y,z)(\op[\chi_\kappa]f)(x-y-z)\dbar y\dbar z\]
  with
  \[ K_{j\kappa}^{(1)}(x,y,z) := K_\kappa(x,z) \tilde K_j(y).\]

  d) Similarly, we apply a) and twice b) to get
  \begin{align*}
    \big( \op[\varphi_j]&\op[a]f\big) (x) = \sum_{\kappa\in\N_0} \big(\op[\varphi_j]\op[a]\op[\varphi_\kappa]\op[\chi_\kappa]f\big)(x) \\
    & = \sum_{\kappa\in\N_0} \int_{\T^n}\int_{\T^n} K_{j\kappa}^{(2)} (x,y,z)(\op[\chi_\kappa]f)(x-y-z)\dbar y\dbar z,
  \end{align*}
  with $K_{j\kappa}^{(2)} (x,y,z) := \tilde K_j(y) K_\kappa(x-y,z)$ which shows the assertion in d).
\end{proof}

The following estimate on the kernel $K_j$ defined in Lemma~\ref{2.6} will be one key ingredient for the proof of Besov space continuity of toroidal pseudodifferential operators.

\begin{theorem}
  \label{2.7}
  Let $b\in S^{m,\rho,0}$, and set
  \[ K_j(x,y) := \sum_{\bk\in\Z^n} e^{i\bk\cdot y}\varphi_j(\bk)b(x,\bk)\quad (j\in\N_0).\]
  Then
  \begin{equation}
    \label{2-6}
    \| K_j(x,y)\|_{L(E)} \le C \, 2^{jm} g_{j,\theta}(y) \|b\|_{m}^{(\rho,0)} \quad (x,y\in\T^n,\, j\in\N_0,\,\theta\in(0,1)),
  \end{equation}
  where
  \[ g_{j,\theta}(y) := \frac{(2^j|y|)^\theta}{|y|^n(1+2^j|y|)}\quad (y\in\T^n).\]
\end{theorem}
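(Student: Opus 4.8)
The plan is to estimate the (finite) sum $K_j(x,y)=\sum_{\bk\in\Z^n}e^{i\bk\cdot y}\varphi_j(\bk)b(x,\bk)$ by two complementary bounds and then to dominate both by $2^{jm}g_{j,\theta}$. Throughout I fix $x\in\T^n$ and $j$, and abbreviate $d(\bk):=\varphi_j(\bk)b(x,\bk)$. The first thing I would record is the product estimate $\|\Delta_\bk^\alpha d(\bk)\|_{L(E)}\le C\langle\bk\rangle^{m-|\alpha|}\|b\|_m^{(\rho,0)}$ for all $|\alpha|\le\rho$, uniformly in $x$ and $j$. This follows from the discrete Leibniz rule for $\Delta_\bk^\alpha$, combined with $\|\Delta_\bk^\gamma b(x,\bk)\|\le\|b\|_m^{(\rho,0)}\langle\bk\rangle^{m-|\gamma|}$ (Definition~\ref{2.1}) and $|\Delta_\bk^\beta\varphi_j(\bk)|\le c_\beta\langle\bk\rangle^{-|\beta|}$ (Definition~\ref{2.3}(iii)); the finitely many index shifts produced by the Leibniz rule are harmless, since $\langle\bk+\delta\rangle\sim\langle\bk\rangle$ and the support remains in a comparable dyadic annulus.

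Next I would prove two bounds on $\|K_j(x,y)\|$. The \emph{zeroth-order bound} uses no cancellation: on $\supp\varphi_j$ one has $\langle\bk\rangle\sim 2^j$ and $\card(\supp\varphi_j)\le C2^{jn}$ (the annulus $\{2^{j-1}\le|\bk|\le 2^{j+1}\}$ has outer radius $2^{j+1}$), so summing $\|d(\bk)\|\le\|b\|_m^{(\rho,0)}\langle\bk\rangle^m$ yields $\|K_j(x,y)\|\le C2^{j(m+n)}\|b\|_m^{(\rho,0)}$. For the \emph{decay bound}, assume $y\neq 0$ and choose a coordinate $l$ with $|y_l|=\max_i|y_i|\ge|y|/\sqrt n$. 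Since $\Delta_{k_l}e^{i\bk\cdot y}=(e^{iy_l}-1)e^{i\bk\cdot y}$, applying discrete summation by parts $N:=n+1$ times in the $l$-th variable (the boundary terms vanish because the sum is finite) gives $(e^{iy_l}-1)^N K_j(x,y)=\pm\sum_\bk e^{i\bk\cdot y}\,\Delta_{k_l}^N d(\bk)$ up to a harmless index shift, where $\Delta_{k_l}^N$ is the $N$-fold difference in the $l$-th variable. Here $N=n+1\le\rho$ is exactly the standing hypothesis $\rho\ge n+1$, and it is what makes the product estimate applicable (with $\alpha=N\delta_l$). Using that estimate, the annulus cardinality, and $|e^{iy_l}-1|=2|\sin(y_l/2)|\ge\frac{2}{\pi}|y_l|\ge c|y|$ on $[-\pi,\pi]^n$, I obtain $\|K_j(x,y)\|\le C2^{j(m+n)}(2^j|y|)^{-(n+1)}\|b\|_m^{(\rho,0)}$.

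Finally I would combine the two bounds. Writing $t:=2^j|y|$, they read $\|K_j(x,y)\|\le C2^{j(m+n)}\min\{1,t^{-(n+1)}\}\|b\|_m^{(\rho,0)}$, whereas a short computation gives $2^{jm}g_{j,\theta}(y)=2^{j(m+n)}\,t^{\theta-n}/(1+t)$. Thus the theorem reduces to the elementary scalar inequality $\min\{1,t^{-(n+1)}\}(1+t)\,t^{n-\theta}\le 2$ for all $t\ge 0$ and $\theta\in(0,1)$: for $t\le 1$ the left-hand side equals $(1+t)t^{n-\theta}\le 2$ (since $t^{n-\theta}\le 1$ when $n-\theta>0$), and for $t\ge 1$ it equals $t^{-(n+1)}(1+t)t^{n-\theta}\le 2t^{-\theta}\le 2$. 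In particular the resulting constant $C$ is independent of $\theta$, as required.

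I expect the main obstacle to be the summation-by-parts step rather than the final elementary estimate: one must carefully track the backward differences and the index shifts coming from the discrete Leibniz rule, verify that the enlarged support still has cardinality $\lesssim 2^{jn}$, and confirm that $n+1$ differences (the smallest integer exceeding $n+1-\theta$ for every admissible $\theta$) both suffice and are available because $\rho\ge n+1$. The fractional exponent $\theta$ itself costs nothing: it only provides slack, turning the two integer-order bounds into a single envelope valid for the whole range $\theta\in(0,1)$ with a uniform constant.
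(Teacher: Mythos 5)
Your proposal is correct, but it produces the fractional factor $(2^j|y|)^\theta$ by a genuinely different mechanism than the paper. Both proofs begin identically: the discrete Leibniz rule together with Definition~\ref{2.1} and Definition~\ref{2.3}(iii) gives $\|\Delta_{\bk}^{\alpha}(\varphi_j(\bk)b(x,\bk))\|_{L(E)}\le C\|b\|_m^{(\rho,0)}\langle\bk\rangle^{m-|\alpha|}$, and discrete summation by parts converts symbol differences into decay in $y$. They diverge at the choice of difference orders. The paper works with $N\in\{n,n+1\}$; since neither of these integer-order bounds controls $g_{j,\theta}$ in the regime $2^j|y|\le 1$ (where the factor $(2^j|y|)^\theta$ makes the target \emph{smaller}, not larger), the paper must extract genuine extra smallness, which it does by writing the summation-by-parts identity with $e^{i\bk\cdot y}-1$ in place of $e^{i\bk\cdot y}$ (permissible because $\sum_{\bk}\Delta_{\bk}^{\gamma}(\varphi_j b)=0$) and then invoking the fractional oscillation estimate $|e^{i\bk\cdot y}-1|\le 2|\bk|^\theta|y|^\theta$ together with the lattice-sum bound $\sum_{0<|\bk|\le 2^{j+2}}|\bk|^{\theta-n}\le C_\theta 2^{j\theta}$. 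You instead take orders $0$ and $n+1$: your trivial bound $C2^{j(m+n)}\|b\|_m^{(\rho,0)}$ is already strong enough for $t:=2^j|y|\le 1$ (your inequality $(1+t)t^{n-\theta}\le 2$), and the $(n+1)$-fold difference in the single largest coordinate handles $t\ge 1$. As a result, $\theta$ enters only as slack in an elementary scalar inequality: you avoid the oscillation trick and the lattice-sum lemma entirely, you need differences in only one coordinate direction (still within the standing hypothesis $\rho\ge n+1$), and your constant is uniform in $\theta$, whereas the paper's $C_\theta$ degenerates as $\theta\downarrow 0$. Your argument also covers all $j\in\N_0$ at once, while the paper's displayed conclusion is stated for $j\ge n_0$. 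The two points you flagged as delicate are indeed the only ones needing care in a full write-up: the sign and index-shift bookkeeping in the iterated Abel summation (harmless, since $|e^{i\bk\cdot y}|=1$ and the summand is finitely supported, so boundary terms vanish), and the small-$j$ case, where the support of $\Delta_{k_l}^{n+1}(\varphi_j(\bk)b(x,\bk))$ may reach $\bk=0$ so that $\langle\bk\rangle\not\sim 2^j$; there $2^j$ is bounded by a constant depending only on $n$, and all discrepancies are absorbed into $C$.
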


\begin{proof}
  The proof follows the ideas from \cite{Barraza-Denk-Hernandez_Nau16}, proof of Lemma~4.8.

  Note that $\varphi_{j}(\mathbf{k})=0$ for $|\mathbf{k}|>2^{j+1}$ implies
$\Delta_{\mathbf{k}}^{\gamma}\varphi_{j}(\mathbf{k})=0$ for $|\mathbf{k}%
|>2^{j+1}+|\gamma|$. In the same way, $\varphi_{j}(\mathbf{k})=0$ for
$|\mathbf{k}|<2^{j-1}$ implies $\Delta_{\mathbf{k}}^{\gamma}\varphi
_{j}(\mathbf{k})=0$ for $|\mathbf{k}|<2^{j-1}-|\gamma|$.

Let $n_{0}$ be the smallest integer such that $2^{-n_{0}}(n+1)\leq\frac{1}{4}%
$. Then
\[
2^{j+1}+|\gamma|\leq2^{j+1}+(n+1)\leq2\cdot2^{j+1}=2^{j+2}%
\]
and
\[
2^{j-1}-|\gamma|\geq2^{j-1}-(n+1)\geq\frac{1}{2}\cdot2^{j-1}=2^{j-2}%
\]
hold for all $j\geq n_{0}$ and all $|\gamma|\leq n+1$.

Condition \ref{2.3} (iii) and the condition $a\in S^{m,\rho,0}$ imply with
the discrete Leibniz formula that
\[
\Vert\Delta_{\mathbf{k}}^{\gamma}(\varphi_{j}(\mathbf{k})a(x,\mathbf{k}%
))\Vert_{L(E)}\leq C\left\Vert a\right\Vert _{m}^{\left(  \rho,0\right)
}\langle\mathbf{k}\rangle^{m-|\gamma|}
\]
 for $\left(  x,\mathbf{k}\right)\in\mathbb{T}^{n}\times\mathbb{Z}^{n}$, $j\in\mathbb{N}_{0}$ and $|\gamma|\leq n+1$. Moreover, for each $x\in\mathbb{T}^{n}$ and $j\geq n_{0}$ we have%
\begin{equation}
\Delta_{\mathbf{k}}^{\gamma}(\varphi_{j}(\mathbf{k})a(x,\mathbf{k}))=0
\label{ec supp operator differ phijmala(k)}%
\end{equation}
if $|\mathbf{k}|<2^{j-2}$ or if $|\mathbf{k}|>2^{j+2}$.

Let $N\in\{n,n+1\}$, and set $(e^{i\eta}-1)^\gamma := \prod_{k=1}^n (e^{i\eta_k}-1)^{\gamma_k}$. Then we have (see \cite{Barraza-Denk-Hernandez_Nau16}, Remark~4.7)
\[
|\eta|^{N}\leq C\sum_{|\gamma|=N}\big|(e^{-i\eta}-1)^{\gamma}\big|\quad
(\eta\in\mathbb{T}^{n})
\]
and
\[
(e^{-i\eta}-1)^{\gamma}K_{j}(x,\eta)=\sum_{\mathbf{k}\in\mathbb{Z}^{n}%
}(e^{i\mathbf{k}\cdot\eta}-1)\Delta_{\mathbf{k}}^{\gamma}(\varphi
_{j}(\mathbf{k})a(x,\mathbf{k})).
\]
In combination with the elementary inequality  $|e^{i\bk\cdot\eta}-1|\le 2|\bk|^\theta|\eta|^\theta$ which holds for all $\theta\in(0,1)$, we get
\begin{equation}
|\eta|^{N}\Vert K_{j}(x,\eta)\Vert_{L(E)}\leq C\left\Vert a\right\Vert
_{m}^{\left(  \rho,0\right)  }|\eta|^{\theta}\sum_{\mathbf{k}\in B_{j}%
}|\mathbf{k}|^{\theta}\langle\mathbf{k}\rangle^{m-N}\quad(x,\eta
\in\mathbb{T}^{n}) \label{0-2}%
\end{equation}
with $B_{j}:=\{\mathbf{k}\in\mathbb{Z}^{n}:2^{j-2}\leq|\mathbf{k}|\leq
2^{j+2}\}$. Due to \cite{Barraza-Denk-Hernandez_Nau16}, inequality (4-5), for all $\mu>0$ the
inequality
\[
\sum_{\substack{\ell\in\mu^{-1}\mathbb{Z}^{n}\setminus\{0\}\\|\ell|_{\infty
}\leq1}}|\ell|^{\theta-n}\mu^{-n}\leq C_{\theta}%
\]
holds. Setting $\mu:=2^{j+2}$, we obtain
\begin{align*}
\sum_{\substack{\mathbf{k}\in\mathbb{Z}^{n}\setminus\{0\}\\|\mathbf{k}%
|\leq2^{j+2}}}|\mathbf{k}|^{\theta-n}  &  =\sum_{\substack{\ell\in\mu
^{-1}\mathbb{Z}^{n}\setminus\{0\}\\|\ell|\leq1}}|\mu\ell|^{\theta-n}\leq
\sum_{\substack{\ell\in\mu^{-1}\mathbb{Z}^{n}\setminus\{0\}\\|\ell|_{\infty
}\leq1}}|\mu\ell|^{\theta-n}\\
&  \leq C_{\theta}\mu^{\theta}=C2^{j\theta}.
\end{align*}
Inserting this into \eqref{0-2} with $N=n$ yields
\begin{equation}
\sum_{\mathbf{k}\in B_{j}}|\mathbf{k}|^{\theta}\langle\mathbf{k}\rangle
^{m-n}\leq\Big(\sup_{\mathbf{k}\in B_{j}}\langle\mathbf{k}\rangle^{m}%
\Big)\sum_{\mathbf{k}\in B_{j}}|\mathbf{k}|^{\theta-n}\leq C\cdot
2^{j(m+\theta)}. \label{ec estimate with i=0}%
\end{equation}
Note here that for $m\geq0$ we used the estimate
$$\langle\mathbf{k}\rangle^{m}\leq C\cdot2^{(j+2)m}=C\cdot2^{jm},$$
while for $m<0$ we used
$$\langle\mathbf{k}\rangle^{m}\leq C\cdot2^{(j-2)m}=C\cdot2^{jm}.$$

For \eqref{0-2} with $N=n+1$ we have in the same way
\begin{equation}
\sum_{\mathbf{k}\in B_{j}}|\mathbf{k}|^{\theta}\langle\mathbf{k}%
\rangle^{m-n-1}\leq C\sum_{\mathbf{k}\in B_{j}}|\mathbf{k}|^{\theta-n}%
\langle\mathbf{k}\rangle^{m-1}\leq C\cdot2^{j(\theta+m-1)}.
\label{ec estimate with i=1}%
\end{equation}
Therefore, we obtain
\begin{align*}
|\eta|^{n}\Vert K_{j}(x,\eta)\Vert_{L(E)}  &  \leq C\left\Vert a\right\Vert
_{m}^{\left(  \rho,0\right)  }\cdot2^{j(m+\theta)}|\eta|^{\theta},\\
|\eta|^{n+1}\Vert K_{j}(x,\eta)\Vert_{L(E)}  &  \leq C\left\Vert a\right\Vert
_{m}^{\left(  \rho,0\right)  }\cdot2^{j(m+\theta-1)}|\eta|^{\theta}.
\end{align*}
Multiplying the second inequality by $2^{j}$ and adding both inequalities
yields
\[
\Vert K_{j}(x,\eta)\Vert_{L(E)}\leq C\left\Vert a\right\Vert _{m}^{\left(
\rho,0\right)  }\cdot2^{jm}\,\frac{(2^{j}|\eta|)^{\theta}}{|\eta|^{n}%
(1+2^{j}|\eta|)}\quad(x,\eta\in\mathbb{T}^{n},\,j\geq n_{0}).
\]
\end{proof}

\section{Mapping properties in toroidal Besov spaces}

In this section, we use the kernel estimates from above to show continuity of pseudodifferential operators in toroidal vector-valued Besov spaces.

\begin{lemma}
  \label{3.1}
  a) Let $p\in[1,\infty]$, and let $K\colon\T^n\times\T^n\to L(E)$ be measurable. Assume that there exists a function $g\in L^1(\T^n)$ with
  \[
  \|K(x,y)\|_{L(E)}\le g(y)\;(x,y\in\T^n).
  \]
  For $x\in\T^n$ and $f\in L^p(\T^n,E)$, define $F(x) := \int_{\T^n} K(x,y)f(x-y)\dbar y$. Then $F(x)$ is well-defined for almost all $x\in\T^n$ and
  \[ \| F\|_{L^p(\T^n,E)} \le \|g\|_{L^1(\T^n)} \|f\|_{L^p(\T^n,E)}\quad (f\in L^p(\T^n,E)).\]

  b) Let $p\in[1,\infty]$, let $K\colon\T^n\times\T^n\times \T^n\to L(E)$ be measurable. Assume that there exist functions $g, h\in L^1(\T^n)$ with
  \[ \|K(x,y,z)\|_{L(E)}\le g(y)h(z)\;(x,y,z\in\T^n).\]
   For $x\in\T^n$, define $F(x) := \int_{\T^n}\int_{\T^n} K(x,y,z)f(x-y-z)\dbar y\dbar z$. Then $F(x)$ is well-defined for almost all $s\in\T^n$ and
  \[ \| F\|_{L^p(\T^n,E)} \le \|g\|_{L^1(\T^n)}\|h\|_{L^1(\T^n)} \|f\|_{L^p(\T^n,E)}\quad (f\in L^p(\T^n,E)).\]
\end{lemma}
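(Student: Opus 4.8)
The plan is to recognise both of the integrals $F$ as convolution-type operators dominated by \emph{scalar} kernels, and to deduce the bounds from Minkowski's integral inequality together with the translation invariance of the normalised Haar measure $\dbar x$ on $\T^n$; the point is that no geometric property of $E$ enters. For part a) I would first settle well-definedness. Since $\T^n$ has finite measure, $L^p(\T^n,E)\hookrightarrow L^1(\T^n,E)$ for every $p\in[1,\infty]$, so $f\in L^1$. Applying Tonelli's theorem to the nonnegative scalar function $(x,y)\mapsto\|K(x,y)\|_{L(E)}\,\|f(x-y)\|$ and using translation invariance, its iterated integral equals $\|g\|_{L^1}\|f\|_{L^1}<\infty$. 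Hence $\int_{\T^n}\|K(x,y)f(x-y)\|\,\dbar y<\infty$ for almost every $x$, so $F(x)$ is a well-defined Bochner integral for a.e.\ $x$.

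For the estimate in a), when $1\le p<\infty$ I would invoke the vector-valued Minkowski integral inequality
\[ \Big\|\int_{\T^n}u(\cdot,y)\,\dbar y\Big\|_{L^p(\T^n,E)}\le\int_{\T^n}\|u(\cdot,y)\|_{L^p(\T^n,E)}\,\dbar y \]
with $u(x,y):=K(x,y)f(x-y)$. This inequality itself reduces to the scalar Minkowski inequality applied to $\|u(\cdot,y)\|$, combined with the triangle inequality for the Bochner integral, so it holds for an arbitrary Banach space $E$. The pointwise bound $\|u(x,y)\|\le g(y)\|f(x-y)\|$ and translation invariance give $\|u(\cdot,y)\|_{L^p(\T^n,E)}\le g(y)\|f\|_{L^p(\T^n,E)}$, and integrating in $y$ yields exactly $\|g\|_{L^1}\|f\|_{L^p}$. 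The endpoint $p=\infty$ I would treat directly from $\|F(x)\|\le\int_{\T^n}g(y)\|f(x-y)\|\,\dbar y\le\|g\|_{L^1}\|f\|_{L^\infty}$.

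For part b) I would \emph{reduce to a)}. Fixing $y$ and writing $f_y:=f(\cdot-y)$, the inner integral $G_y(x):=\int_{\T^n}K(x,y,z)f(x-y-z)\,\dbar z=\int_{\T^n}K(x,y,z)f_y(x-z)\,\dbar z$ is precisely of the form handled in a), with kernel $(x,z)\mapsto K(x,y,z)$ dominated by the $L^1$ function $g(y)h(\cdot)$. Part a) then gives $\|G_y\|_{L^p}\le g(y)\|h\|_{L^1}\|f_y\|_{L^p}=g(y)\|h\|_{L^1}\|f\|_{L^p}$, again by translation invariance. Since $F(x)=\int_{\T^n}G_y(x)\,\dbar y$ (Fubini, justified as in a) by the finiteness of the triple integral $\|g\|_{L^1}\|h\|_{L^1}\|f\|_{L^1}$), one further application of Minkowski's integral inequality produces $\|F\|_{L^p}\le\int_{\T^n}g(y)\|h\|_{L^1}\|f\|_{L^p}\,\dbar y=\|g\|_{L^1}\|h\|_{L^1}\|f\|_{L^p}$. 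Alternatively, the two-variable Minkowski inequality applies directly.

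The only genuinely delicate point, given that $E$ is an arbitrary Banach space, is measurability: one must ensure that $(x,y)\mapsto K(x,y)f(x-y)$ and its three-variable analogue are strongly (Bochner) measurable, so that Tonelli's theorem and the vector-valued Minkowski inequality are applicable. This is where I would be most careful, using the essentially separable range of the strongly measurable $f$ together with the measurability hypothesis on $K$. Everything else is a routine consequence of Minkowski's inequality and the invariance of Haar measure under translation; in particular it requires no reflexivity, UMD, or Radon--Nikodym property of $E$.
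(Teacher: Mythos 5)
Your proof is correct and follows essentially the same route as the paper: both arguments reduce the vector-valued bound to the scalar estimate $\|F(x)\|\le (g*\|f\|)(x)$ and conclude with the $L^1$--$L^p$ convolution inequality, which you simply prove inline via Minkowski's integral inequality and translation invariance rather than citing Young's inequality as the paper implicitly does. Your reduction of b) to a) (fixing $y$, then one more application of Minkowski) is equivalent to the paper's iterated convolution bound $\|F(x)\|\le\big(h*(g*\|f\|)\big)(x)$, and the strong-measurability issue you flag is one the paper glosses over as well.
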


\begin{proof}
  a) Let $p\in[1,\infty)$. For $x\in\T^n$, we have
  \begin{align*}
    \|F(x)\| & = \Big\| \int_{\T^n} K(x,y)f(x-y)\dbar y\Big\| \le \int_{\T^n} \|K(x,y)\|_{L(E)} \|f(x-y)\|\dbar y\\
    & \le \int_{\T^n} g(y) \|f(x-y)\| \dbar y = (g * \|f\|)(x).
  \end{align*}
  Therefore,
  \[ \|F\|_{L^p(\T^n,E)} \le \big\| g * \|f\|\,\big\|_{L^p(\T^n)} \le \|g\|_{L^1(\T^n)}\|f\|_{L^p(\T^n,E)}.\]
  In particular, this yields that $F(x)$ is well-defined for almost all $x\in\T^n$. The case $p=\infty$ follows similarly.

  b) This follows in the same way. By the assumption on $K$, we can estimate
  \[ \|F(x)\| \le \int_{\T^n}\Big[\int_{\T^n} g(y)h(z)\|f(x-y-z)\| \dbar y\Big]\dbar z = \big( h* (g*\|f\|)\big)(x).\]
  This yields the desired estimate on $\|F\|_{L^p(\T^n,E)}$ and the fact that $F(x)$ is well-defined for almost all $x\in\T^n$.
\end{proof}

\begin{lemma}
  \label{3.2}
  Let $a\in S^{m,\rho,r}$ with $r\in (0,1)$.

  a) For all $j\in\N_0$ and $f\in C^\infty(\T^n,E)$,
  \[ \|\op[a]\op[\varphi_j]f\|_{L^p(\T^n,E)} \le C \|a\|_m^{(\rho,r)} 2^{jm} \|\op[\chi_j]f\|_{L^p(\T^n,E)}.\]

  b) For all $j\in\N_0$ and $f\in C^\infty(\T^n,E)$,
  \begin{align*}
   \|\op[\varphi_j] & \op[a]f\|_{L^p(\T^n,E)} \\
   & \le C \|a\|_m^{(\rho,r)} \big(2^{jm} \|\op[\chi_j]f\|_{L^p(\T^n,E)} + 2^{-jr} \|f\|_{B_{p1}^m(\T^n,E)}\big).
   \end{align*}
  \end{lemma}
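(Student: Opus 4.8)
The plan is to reduce both parts to the kernel estimate of Theorem~\ref{2.7} combined with the convolution bounds of Lemma~\ref{3.1}, the only genuinely new ingredient being a commutator estimate for part~b). For part a), I would first note that $a\in S^{m,\rho,r}$ implies $a\in S^{m,\rho,0}$ with $\|a\|_m^{(\rho,0)}\le\|a\|_m^{(\rho,r)}$, so Theorem~\ref{2.7} applies to $b=a$. Since $\varphi_j\chi_j=\varphi_j$ gives $\op[\varphi_j]\op[\chi_j]=\op[\varphi_j]$, Lemma~\ref{2.6}~b) (applied with $\op[\chi_j]f$ in place of $f$) yields $(\op[a]\op[\varphi_j]f)(x)=\int_{\T^n}K_j(x,y)(\op[\chi_j]f)(x-y)\dbar y$, where $K_j$ is exactly the kernel of Theorem~\ref{2.7}, so that $\|K_j(x,y)\|_{L(E)}\le C2^{jm}g_{j,\theta}(y)\|a\|_m^{(\rho,r)}$. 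The key elementary computation is that $\|g_{j,\theta}\|_{L^1(\T^n)}$ is bounded uniformly in $j$: the substitution $u=2^jy$ turns $\int g_{j,\theta}(y)\dbar y$ into a constant multiple of $\int_{|u|\le 2^j\pi}|u|^{\theta-n}(1+|u|)^{-1}\,du$, which converges (near $0$ because $\theta>0$, at infinity because $\theta<1$) to a bound independent of $j$. Applying Lemma~\ref{3.1}~a) with $g=C2^{jm}\|a\|_m^{(\rho,r)}g_{j,\theta}$ then gives a).

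For part b), I would split $\op[\varphi_j]\op[a]f$ into a "frozen" main term plus a commutator. By Lemma~\ref{2.6}~c) and d) the operators $\op[a]\op[\varphi_j]$ and $\op[\varphi_j]\op[a]$ have kernels $K_{j\kappa}^{(1)}(x,y,z)=\tilde K_j(y)K_\kappa(x,z)$ and $K_{j\kappa}^{(2)}(x,y,z)=\tilde K_j(y)K_\kappa(x-y,z)$, both tested against $(\op[\chi_\kappa]f)(x-y-z)$ and summed over $\kappa$. Writing $K_{j\kappa}^{(2)}=K_{j\kappa}^{(1)}+(K_{j\kappa}^{(2)}-K_{j\kappa}^{(1)})$, the $K_{j\kappa}^{(1)}$-part reassembles (by Lemma~\ref{2.6}~c)) to $\op[a]\op[\varphi_j]f$, which part a) bounds by $C\|a\|_m^{(\rho,r)}2^{jm}\|\op[\chi_j]f\|_{L^p}$. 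It remains to estimate the commutator part, whose kernel is $\tilde K_j(y)\big[K_\kappa(x-y,z)-K_\kappa(x,z)\big]$.

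This commutator is the heart of the argument. For fixed $y$, the difference $K_\kappa(x-y,z)-K_\kappa(x,z)$ is the Theorem~\ref{2.7}-kernel of the symbol $b_y(x,\bk):=a(x-y,\bk)-a(x,\bk)$. Applying the Hölder part of the norm $\|a\|_m^{(\rho,r)}$ to each discrete derivative $\Delta_\bk^\alpha a$ with $|\alpha|\le\rho$ gives $\|b_y\|_m^{(\rho,0)}\le\|a\|_m^{(\rho,r)}|y|^r$, whence Theorem~\ref{2.7} yields $\|K_\kappa(x-y,z)-K_\kappa(x,z)\|_{L(E)}\le C2^{\kappa m}g_{\kappa,\theta}(z)\|a\|_m^{(\rho,r)}|y|^r$. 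For the $y$-factor I would bound $|\tilde K_j(y)|\le Cg_{j,\theta'}(y)$ (Theorem~\ref{2.7} with $b\equiv\id_E$) and compute, again via $u=2^jy$, that $\int_{\T^n}|\tilde K_j(y)|\,|y|^r\dbar y\le C2^{-jr}\int|u|^{\theta'+r-n}(1+|u|)^{-1}\,du$. This last integral converges precisely when $\theta'+r<1$, so choosing $\theta'\in(0,1-r)$ produces the gain $\int_{\T^n}|\tilde K_j(y)|\,|y|^r\dbar y\le C2^{-jr}$.

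Applying Lemma~\ref{3.1}~b) to each $\kappa$-term with $g(y)=C|\tilde K_j(y)|\,|y|^r$ and $h(z)=C2^{\kappa m}\|a\|_m^{(\rho,r)}g_{\kappa,\theta}(z)$, and summing over $\kappa$, the commutator is bounded by $C2^{-jr}\|a\|_m^{(\rho,r)}\sum_\kappa 2^{\kappa m}\|\op[\chi_\kappa]f\|_{L^p}$; since $\chi_\kappa=\varphi_{\kappa-1}+\varphi_\kappa+\varphi_{\kappa+1}$, an index shift shows this sum is $\le C\|f\|_{B_{p1}^m(\T^n,E)}$. Adding the two contributions gives b). The main obstacle is exactly the extraction of the decay factor $2^{-jr}$: it hinges both on the symbol-norm bound $\|b_y\|_m^{(\rho,0)}\le\|a\|_m^{(\rho,r)}|y|^r$ (which forces the Hölder estimate to hold uniformly for all $\Delta_\bk^\alpha$, $|\alpha|\le\rho$) and on the admissible choice $\theta'\in(0,1-r)$ making the weighted kernel integrable, which is where the hypothesis $r<1$ enters.
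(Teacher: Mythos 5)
Your proposal is correct and follows essentially the same route as the paper's own proof: part a) via the kernel representation of Lemma~\ref{2.6}~b) (with $f$ replaced by $\op[\chi_j]f$, using $\varphi_j\chi_j=\varphi_j$), Theorem~\ref{2.7}, the uniform $L^1$-bound on $g_{j,\theta}$, and Lemma~\ref{3.1}~a); part b) via the commutator $\op[\varphi_j]\op[a]-\op[a]\op[\varphi_j]$ with kernel $\tilde K_j(y)\bigl[K_\kappa(x-y,z)-K_\kappa(x,z)\bigr]$, the H\"older bound $\|a(\cdot-y,\cdot)-a(\cdot,\cdot)\|_m^{(\rho,0)}\le |y|^r\|a\|_m^{(\rho,r)}$, and the choice $\theta'\in(0,1-r)$ to extract the factor $2^{-jr}$ (the paper phrases this as $|y|^r g_{j,\theta'}(y)=2^{-jr}g_{j,\theta'+r}(y)$, which is the same computation). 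No gaps; the argument is complete.
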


  \begin{proof}
    a) By Lemma~\ref{2.6} b),
    \[ (\op[a]\op[\varphi_j]f)(x) = \int_{\T^n} K_j(x,y)f(x-y)\dbar y\]
    with $K_j$ being defined in \eqref{2-4}. Due to Theorem~\ref{2.7}, for arbitrary $\theta\in (0,1)$,
    \[ \|K_j(x,y)\|_{L(E)} \le C 2^{jm} \|a\|_m^{(\rho,r)} g_{j,\theta}(y)\quad (x,y\in\T^n).\]
    Because of
    \begin{align*}
      \|g_{j,\theta}\|_{L^1(\T^n)} & = \int_{\T^n} \frac{(2^j|y|)^\theta}{|y|^n(1+2^j|y|)}\dbar y = \int_{2^j\T^n} \frac{|z|^\theta}{|z|^n(1+|z|)}\dbar z  \\
      & \le \int_{\R^n}  \frac{|z|^\theta}{|z|^n(1+|z|)}\dbar z  <\infty,
    \end{align*}
    we can apply Lemma~\ref{3.1} a) to obtain the assertion of a).

    b) We consider the difference
    \begin{align*}
      \big( \op[\varphi_j]\op[a] & - \op[a]\op[\varphi_j]\big)f(x) \\
      & = \sum_{\kappa\in\N_0} \int_{\T^n}\int_{\T^n} K_{j\kappa}(x,y,z) (\op[\chi_\kappa]f)(x-y-z)\dbar y\dbar z
    \end{align*}
    with
    \begin{align*}
      K_{j\kappa}&(x,y,z) := K_{j\kappa}^{(2)}(x,y,z) - K_{j\kappa}^{(1)}(x,y,z) \\
      & = \sum_{\bk,\ell\in\Z^n} e^{i\ell\cdot y} e^{i\bk\cdot z} \varphi_j(\ell)\varphi_\kappa(\bk)\big( a(x-y,\bk)-a(x,\bk)\big)\\
      & = \Big(\sum_{\ell\in\Z^n} e^{i\ell\cdot y}\varphi_j(\ell)\Big) \Big(\sum_{\bk\in\Z^n} e^{i\bk \cdot z}\varphi_\kappa(\bk) \big( a(x-y,\bk)-a(x,\bk)\big)\Big)\\
      & =: \tilde K_j(y) K_\kappa'(x,y,z).
    \end{align*}
    We apply Theorem~\ref{2.7} with $b(x,\bk):= a(x-y,\bk)-a(x,\bk)$ where $y\in\T^n$ is fixed. By the definition of $S^{m,\rho,r}$ we have
    \begin{align*}
     \|b\|_m^{(\rho,0)} & = \max_{|\alpha|\le \rho} \sup_{x\in\T^n}\sup_{\bk\in\Z^n} \langle\bk\rangle^{|\alpha|-m} \| \Delta_\bk^\alpha (a(x-y,\bk)-a(x,\bk))\|_{L(E)}\\
     & \le |y|^r \|a\|_{m}^{(\rho,r)}.
     \end{align*}
     Note that $0<r<1$. From Theorem~\ref{2.7}  we get
     \[ \|K_\kappa'(x,y,z)\|_{L(E)} \le C |y|^r 2^{\kappa m} \|a\|_m^{(\rho,r)} g_{\kappa,\theta_1}(z)\quad (x,y,z\in\T^n)\]
     for arbitrary $\theta_1\in (0,1)$. Another application of Theorem~\ref{2.7} with constant symbol $b(x,\bk) = \id_E$ yields
     \[ \|\tilde K_j(y)\|_{L(E)} \le C g_{j,\theta_2}(y)\quad (y\in\T^n)\]
     for all $\theta_2\in (0,1)$. Therefore,
     \[ \|K_{j\kappa}(x,y,z)\|_{L(E)} \le C 2^{\kappa m} \|a\|_{m}^{(\rho,r)} |y|^r g_{\kappa,\theta_1}(z) g_{j,\theta_2}(y).\]
     Because of $r\in (0,1)$, we can choose $\theta_2\in (0,1-r)$ and obtain for $\theta_0 := \theta_2+r \in (0,1)$
     \[ |y|^r g_{j,\theta_2}(y) = |y|^r\,\frac{(2^j|y|)^{\theta_2}}{|y|^n(1+2^j|y|)} = 2^{-jr} g_{j,\theta_0}(y).\]
     Therefore,
     \[ K_{j\kappa}(x,y,z)\|_{L(E)} \le C 2^{\kappa m} 2^{-jr} \|a\|_m^{(\rho,r)} g_{j,\theta_0}(y)g_{\kappa,\theta_1}(z).\]
     We have seen above that $\|g_{j,\theta_0}\|_{L^1(\T^n)}\le C<\infty$ and $\|g_{\kappa,\theta_1}\|_{L^1(\T^n)} \le C<\infty$. Therefore, we can apply Lemma~\ref{3.1} b) to get
     \begin{align*}
       \big\| \big(\op[\varphi_j]\op[a] & - \op[a]\op[\varphi_j]\big)f\big\|_{L^p(\T^n,E)} \\
       & \le C 2^{-jr} \|a\|_m^{(\rho,r)} \sum_{\kappa\in\N_0} 2^{\kappa m} \|[\op\chi_\kappa]f\|_{L^p(\T^n,E)}.
     \end{align*}
     By the definition of $\chi_\kappa$,
     \begin{align*}
       \sum_{\kappa\in\N_0} 2^{\kappa m} & \|\op[\chi_\kappa]f\|_{L^p(\T^n,E)} \\
       &=  \sum_{\kappa\in\N_0} 2^{\kappa m}  \big( \|(\op[\varphi_{\kappa-1}]+ \op[\varphi_{\kappa}]+\op[\varphi_{\kappa+1}])f\|_{L^p(\T^n,E)}\\
       & \le (2^{-m} +1 + 2^m) \sum_{\kappa\in\N_0} 2^{\kappa m} \|\op[\varphi_\kappa]f\|_{L^p(\T^n,E)}\\
       & = C \|f\|_{B_{p1}^m(\T^n,E)}.
     \end{align*}
     Therefore,
     \[ \|(\op[\varphi_j]\op[a]-\op[a]\op[\varphi_j])f\|_{L^p(\T^n,E)} \le C 2^{-jr} \|a\|_m^{(\rho,r)} \|f\|_{B_{p1}^m(\T^n,E)}.\]
     Together with part a) this yields the assertion of b).
  \end{proof}

  The last lemma is the essential step in the proof of Besov space continuity. The following theorem is  the main result of the present paper.

  \begin{theorem}\label{3.3}
  Let $m\in\R$, $\rho\in\N$ with $\rho\ge n+1$, and $r\in (0,\infty)$, and let $a\in S^{m,\rho,r}$. Then for $s\in (0,r)$ and $p,q\in [1,\infty]$, the mapping
\[
\op[a]\colon B_{pq}^{s+m}(\mathbb{T}^{n},E)\rightarrow B_{pq}^{s}%
(\mathbb{T}^{n},E)
\]
is continuous. Moreover,
\[
\left(  a\mapsto \op[a]\right)  \in L\left(  S^{m,\rho,r},L\left(
B_{pq}^{s+m}(\mathbb{T}^{n},E),B_{pq}^{s}(\mathbb{T}^{n},E)\right)  \right)  .
\]
\end{theorem}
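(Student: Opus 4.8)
The plan is to reduce the \emph{moreover} statement to a single quantitative estimate and then to prove that estimate by feeding Lemma~\ref{3.2} into the geometric series hidden in the Besov norm. Since $\op[a]$ depends linearly on $a$ (directly from the defining series \eqref{2-1}), the membership $(a\mapsto\op[a])\in L(S^{m,\rho,r},L(B_{pq}^{s+m}(\T^n,E),B_{pq}^{s}(\T^n,E)))$ follows once one establishes the operator-norm bound
\[
\|\op[a]f\|_{B_{pq}^{s}(\T^n,E)} \le C\,\|a\|_m^{(\rho,r)}\,\|f\|_{B_{pq}^{s+m}(\T^n,E)}
\]
with $C$ independent of $a$ and $f$. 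I would first prove this for $f$ in the dense subspace $C^\infty(\T^n,E)$ and then extend by density (for $q<\infty$; in the case $q=\infty$ the a priori estimate is carried over to all of $B_{pq}^{s+m}$ by the usual approximation argument).

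\emph{Core estimate (case $r\in(0,1)$).} By Definition~\ref{2.5} we have $\|\op[a]f\|_{B_{pq}^{s}}=\|(2^{js}\|\op[\varphi_j]\op[a]f\|_{L^p})_{j\in\N_0}\|_{\ell^q(\N_0)}$, so everything reduces to estimating $\op[\varphi_j]\op[a]f$. Applying Lemma~\ref{3.2}(b) and multiplying by $2^{js}$ gives
\[
2^{js}\|\op[\varphi_j]\op[a]f\|_{L^p} \le C\|a\|_m^{(\rho,r)}\big(2^{j(s+m)}\|\op[\chi_j]f\|_{L^p} + 2^{j(s-r)}\|f\|_{B_{p1}^{m}}\big).
\]
Taking $\ell^q(\N_0)$-norms, the first summand is controlled by $\|f\|_{B_{pq}^{s+m}}$ (using $\chi_j=\varphi_{j-1}+\varphi_j+\varphi_{j+1}$ together with an index shift), while the sequence $(2^{j(s-r)})_j$ lies in $\ell^q(\N_0)$ precisely because $s<r$, so the second summand contributes $C\|f\|_{B_{p1}^{m}}$. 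Finally the elementary embedding $B_{pq}^{s+m}\hookrightarrow B_{p1}^{m}$, valid since $s>0$, bounds this by $\|f\|_{B_{pq}^{s+m}}$. Thus the two defining constraints $s>0$ and $s<r$ enter the proof exactly through the embedding and through the summability, respectively.

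\emph{Passage to general $r$ and the main obstacle.} For $r\in(0,1)$ the above is complete. If $r\ge1$ but $s\in(0,1)$, I would use the continuous embedding $S^{m,\rho,r}\hookrightarrow S^{m,\rho,r'}$ (with $\|a\|_m^{(\rho,r')}\le C\|a\|_m^{(\rho,r)}$) for some $r'\in(s,1)$ and invoke the case just treated. The genuinely harder range is $s\ge1$, which forces $r>1$ and where Lemma~\ref{3.2} is no longer directly applicable, since its commutator bound rests on the single-difference estimate $\|a(\cdot-y,\cdot)-a(\cdot,\cdot)\|_m^{(\rho,0)}\le|y|^{r}\|a\|_m^{(\rho,r)}$, which needs $r<1$. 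I expect this to be the main obstacle, and I see two routes. Route (i): replace the single difference in the proof of Lemma~\ref{3.2}(b) by a Taylor expansion of $y\mapsto a(x-y,\bk)$ of order $\lfloor r\rfloor$; the polynomial terms reproduce pseudodifferential operators with symbols $\partial_x^\beta a$ ($1\le|\beta|\le\lfloor r\rfloor$) of order $m$, together with gain factors $2^{-j|\beta|}$ arising from integrating $y^\beta$ against $\widetilde K_j(y)$, while the Taylor remainder again yields the decisive $2^{-jr}$ decay, so that the same geometric-series argument closes the estimate for all $s\in(0,r)$ at once. Route (ii): an induction on $\lceil s\rceil$ based on the Besov lifting $\|g\|_{B_{pq}^{s}}\simeq\|g\|_{B_{pq}^{s-1}}+\sum_{i=1}^n\|\partial_{x_i}g\|_{B_{pq}^{s-1}}$, the product rule $\partial_{x_i}\op[a]f=\op[a]\partial_{x_i}f+\op[\partial_{x_i}a]f$, and the fact that $\partial_{x_i}a\in S^{m,\rho,r-1}$. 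Route (i) is cleaner because it avoids the integer values $s\in\N$, at which the inductive route (ii) would drop to a level $B_{pq}^{0}$ lying outside the admissible range $(0,r)$; the careful bookkeeping of the Taylor remainder and of the combinatorial gain factors $2^{-j|\beta|}$ is where the real work lies.
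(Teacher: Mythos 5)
Your reduction of the ``moreover'' part to a uniform bound, and your treatment of the range $s\in(0,1)$ (after lowering $r$ to some $r'\in(s,1)$ via the symbol-class embedding), are correct. In fact, on that range your argument is slightly cleaner than the paper's: the paper proves the core case only for $q=1$ and recovers general $q\in[1,\infty]$ by real interpolation between $B_{p1}^{t\pm\epsilon}$-levels, whereas your direct $\ell^q$-estimate --- first term controlled by $\|f\|_{B^{s+m}_{pq}}$ via the index shift, second by $\|(2^{j(s-r)})_j\|_{\ell^q}\|f\|_{B^m_{p1}}$ together with the embedding $B^{s+m}_{pq}\hookrightarrow B^m_{p1}$ for $s>0$ --- handles all $q$ at once.

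The gap is in the range $s\ge 1$ (hence $r>1$): you discard the route that works and commit to one that does not close as sketched. In route (i), the Taylor term of order $\beta$ replaces $\tilde K_j(y)$ by $y^\beta\tilde K_j(y)$, and the best available bound is $\|y^\beta\tilde K_j\|_{L^1(\T^n)}\le C2^{-j|\beta|}$. Feeding this into Lemma~\ref{3.1}~b) exactly as in the proof of Lemma~\ref{3.2}~b) produces a contribution
\[
C\,2^{-j|\beta|}\,\|a\|_m^{(\rho,r)}\sum_{\kappa\in\N_0}2^{\kappa m}\|\op[\chi_\kappa]f\|_{L^p(\T^n,E)}
\;=\;C\,2^{-j|\beta|}\,\|a\|_m^{(\rho,r)}\,\|f\|_{B^m_{p1}(\T^n,E)},
\]
because the sum over $\kappa$ is unrestricted. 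After multiplication by $2^{js}$ this is $2^{j(s-|\beta|)}\|f\|_{B^m_{p1}}$, and since $|\beta|=1$ occurs, the resulting sequence fails to lie in $\ell^q$ for every $q$ when $s>1$ (and for every $q<\infty$ when $s=1$): the polynomial terms are \emph{not} error terms in precisely the range route (i) was designed to cover, so ``the same geometric-series argument'' does not close. To rescue them you would need these terms to remain frequency-localized at scale $2^j$, so that only $\kappa$ with $|\kappa-j|\le C$ contributes and $\|f\|_{B^m_{p1}}$ improves to $2^{jm}\|\op[\chi_j]f\|_{L^p}$; but multiplication by $y^\beta$ destroys localization on the torus, since $y^\beta$ is a sawtooth whose Fourier coefficients decay only to first order. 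Repairing this forces an expansion in the discrete monomials $(e^{-iy}-1)^\beta$ and discrete summation by parts --- at which point you have rebuilt route (ii), induction and integer obstruction included. A second, smaller flaw: the claim that the Taylor remainder ``again'' yields $2^{-jr}$ is false with the kernel bound of Theorem~\ref{2.7}, since $\||y|^r g_{j,\theta_2}\|_{L^1}=2^{-jr}\|g_{j,\theta_2+r}\|_{L^1}\sim 2^{-j(1-\theta_2)}$ once $r+\theta_2>1$; one needs the stronger all-order bound $|\tilde K_j(y)|\le C_M2^{jn}(1+2^j|y|)^{-M}$, which does hold for $\tilde K_j$ because $\varphi_j$ satisfies difference estimates of every order, but which must be proved separately.

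Route (ii), which you dismiss, is essentially the paper's proof, and the integer-$s$ obstruction you raise against it has a cheap, standard fix --- real interpolation, the very tool your scheme otherwise avoids. The paper handles $s\notin\N$ by writing $s=s_0+s_1$ with $s_1\in(0,1)$, using the equivalent norm \eqref{3-1}, the Leibniz rule $\op[\varphi_j]\partial_x^\alpha\op[a]f=\sum_{\beta\le\alpha}\binom{\alpha}{\beta}\op[\varphi_j]\op[\partial_x^\beta a]\partial_x^{\alpha-\beta}f$, and the fact that $\partial_x^\beta a\in S^{m,\rho,\tilde r-s_0}$ with $\tilde r\in(s,r]$ chosen so that $\tilde r-s_0\in(0,1)$, which reduces everything to the fractional case; the finitely many integer values $s\in\N$ then follow by interpolating between $s-\epsilon$ and $s+\epsilon$. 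Since route (i), even if repaired, runs into the same integer levels, interpolation is not an optional convenience here but the step that actually finishes the proof.
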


\begin{proof}
\textbf{(i)} We first consider the case $r\in (0,1)$.
We start with showing that
\[
\op[a]\colon B_{p1}^{s+m}(\mathbb{T}^{n},E)\to B_{p1}^{s}%
(\mathbb{T}^{n},E)
\]
is continuous. For that we will use the density of $C^{\infty}(\mathbb{T}^{n},E)$ in $B_{p1}^{s+m}(\mathbb{T}^{n},E)$ (see \cite{Barraza-Denk-Hernandez_Nau16}, Theorem 3.15). Let $f\in C^{\infty}(\mathbb{T}^{n},E)$. Then by Lemma \ref{3.2} b) we obtain
that
\begin{align*}
&  \big\Vert \op[a]f\big\Vert_{B_{p1}^{s}(\mathbb{T}^{n},E)} =\sum_{j=0}^{\infty}2^{js}\Vert \op[\left.  \varphi_{j}\right\vert
_{\mathbb{Z}^{n}}]\op[a]f\Vert_{L^{p}(\mathbb{T}^{n},E)}\\
&  \leq C\left\Vert a\right\Vert _{m}^{\left(  \rho,r\right)  }\bigg(
\sum_{j \in\N_0}2^{j\left(  s+m\right)  }\Vert\op[\chi_{j}%
]f\Vert_{L^{p}(\mathbb{T}^{n},E)}+\Vert f\Vert_{B_{p1}%
^{m}(\mathbb{T}^{n},E)}\sum_{j \in\N_0}2^{j\left(  s-r\right)  }\bigg).
\end{align*}
We have seen in the proof of Lemma~\ref{3.2} that the first sum can be estimated by $C\|f\|_{B_{p1}^{s+m}(\T^n,E)}$. For the second term, we note that $\sum_{j\in\N_0} 2^{j(s-r)}$ is finite because of $r>s$ and use the continuous embedding $B_{p1}^{s+m}\left(  \mathbb{T}^{n},E\right)  \hookrightarrow B_{p1}^{m}\left(
\mathbb{T}^{n},E\right)$. Therefore,
\[ \|\op[a] f\|_{B_{p1}^s(\T^n,E)} \le C \|a\|_m^{(\rho,r)} \|f\|_{B_{p1}^{s+m}(\T^n,E)} \]
which shows the continuity of $\op[a]\colon  B_{p1}^{s+m}(\mathbb{T}^{n},E)\to B_{p1}^{s}(\mathbb{T}^{n},E)$ as well as the continuity of $a\mapsto \op[a]$ for $q=1$.

For general
$q\in\left[  1,\infty\right]  $ we use  real interpolation theory: For
$q\in\left[  1,\infty\right]  $, we choose some $0<\epsilon<1$ such that
$s-\epsilon,s+\epsilon\in\left(  0,r\right)  .$ Then%
\[
B_{pq}^{t}(\mathbb{T}^{n},E)=\left(  B_{p1}^{t-\epsilon}(\mathbb{T}%
^{n},E),B_{p1}^{t+\epsilon}(\mathbb{T}^{n},E)\right)  _{1/2,q}\text{   for
}t\in\left\{  s,s+m\right\}  .
\]
Now the continuity of%
\[
\op[a]\colon B_{p1}^{s\pm\epsilon+m}(\mathbb{T}^{n},E)\to B_{p1}^{s\pm\epsilon}(\mathbb{T}^{n},E)
\]
and  real interpolation  immediately give the
continuity of%
\[
\op[a]\colon B_{pq}^{s+m}(\mathbb{T}^{n},E)\to B_{pq}^{s}(\mathbb{T}^{n},E).
\]
In the same way, the continuity of the map $a\mapsto \op[a]$ follows.

\medskip

\textbf{(ii)} Now let $r\in [1,\infty)$, and let $s\in (0,r)$. We first assume that $s\not\in \N$, i.e., $s=s_0+s_1$ with $s_0\in\N$ and $s_1\in (0,1)$. We choose $\tilde r\in (s,r]$ such that $r_1:= \tilde r-s_0\in (0,1)$. Then $a\in S^{m,\rho,\tilde r}$ by the definition of the symbol class.

We make use of an equivalent norm in $B_{pq}^s(\T^n,E)$. More precisely, there exist constants $c_1,c_2>0$ such that
\begin{equation}\label{3-1}
 c_1 \sum_{|\alpha|\le s_0} \|\partial_x^\alpha u\|_{B_{pq}^{s_1}(\T^n,E)} \le \|u\|_{B_{pq}^s(\T^n,E)} \le c_2 \sum_{|\alpha|\le s_0} \|\partial_x^\alpha u\|_{B_{pq}^{s_1}(\T^n,E)}
 \end{equation}
for all $u\in B_{pq}^s(\T^n,E)$, see \cite{Amann97}, (5.19), for the case of $\R^n$, and \cite{Arendt-Bu04}, proof of Theorem 2.3, for the one-dimensional torus.

Let $f\in C^\infty(\T^n,E)$, and let $j\in\N_0$ and  $\alpha\in\N_0^n$ with $|\alpha|\le s_0$. By Lemma~\ref{2.6} d) and the Leibniz rule, we have
\begin{align*}
&\op[\varphi_j]\partial_x^\alpha\op[a]f =\partial_x^\alpha\op[\varphi_j]\op[a]f \\
& = \partial_x^\alpha\Big[ \sum_{\kappa\in\N_0} \int_{\T^n}\int_{\T^n} K_{j\kappa}^{(2)} (x,y,z) (\op[\chi_\kappa]f)(x-y-z)\dbar y\dbar z\Big]\\
& = \sum_{\beta\le\alpha} \sum_{\kappa\in\N_0} \int_{\T^n}\int_{\T^n}
(\partial_x^\beta K_{j\kappa}^{(2)})(x,y,z) \big( \op[\chi_\kappa]\partial_x^{\alpha-\beta}f\big) (x-y-z)\dbar y\dbar z\\
& = \sum_{\beta\le \alpha} \binom\alpha\beta\big( \op[\varphi_j]\op[a_\beta] (\partial_x^{\alpha-\beta} f)\big)(x)
\end{align*}
with the symbol $a_\beta(x,\bk) := (\partial_x^\beta a)(x,\bk)\;(x\in\T^n,\,\bk\in\Z^n)$. Here we note that for all $|\beta|\le s_0$, we have $a_\beta\in S^{m,\rho,r_0}$ with $\|a_\beta\|_m^{(\rho,r_0)} \le C \|a\|_m^{(\rho,\tilde r)}\le C \|a\|_m^{(\rho,r)}$. In particular, the series over $\kappa$ above are uniformly convergent with respect to $x$ by Lemma~\ref{2.6} d) and we may change the order of differentiation and integration.

For $|\alpha|\le s_0$ and $\beta \le \alpha$, we can apply part (i) of the proof and obtain
\begin{align*}
 \|\op[a_\beta]\partial_x^{\alpha-\beta} f\|_{B_{pq}^{s_1}(\T^n,E)} & \le C \|a_\beta\|_m^{(\rho,r_0)} \|\partial_x^{\alpha-\beta}f\|_{B_{pq}^{s_1+m}(\T^n,E)}\\
& \le \|a\|_m^{(\rho,r)} \|f\|_{B_{pq}^{s+m}(\T^n,E)}.
\end{align*}
Together with \eqref{3-1}, this yields
\[ \|\op[a]f\|_{B_{pq}^s(\T^n,E)} \le c_2 \sum_{|\alpha|\le s_0} \|\partial_x^\alpha \op[a] f\|_{B_{pq}^{s_1}(\T^n,E)} \le C \|a\|_m^{(\rho,r)} \|f\|_{B_{pq}^{s+m}(\T^n,E)}.\]
This shows the desired continuity in the case $s\in (0,r)\setminus\N$. Finally, if $s\in\N$, we choose $\epsilon\in (0,1)$ with $0<s-\epsilon<s+\epsilon<r$. As we have seen,
\[ \op[a]\colon B^{s+m\pm\epsilon}_{pq}(\T^n,E)\to B^{s\pm\epsilon}_{pq}(\T^n,E)\]
is continuous. Now the continuity of
\[ \op[a]\colon B^{s+m }_{pq}(\T^n,E)\to B^{s }_{pq}(\T^n,E)\]
again follows by real interpolation $(\ldots)_{1/2,q}$. So we have seen that the continuity of the operator $\op[a]$ stated in the theorem as well as the continuity of $a\mapsto \op[a]$ hold in all cases.
\end{proof}

\begin{remark}
  \label{3.4}
  a) As a particular case, we obtain the continuity of $\op[a]$ in the case of $x$-independent symbols. In fact, this could more easily be obtained by the observation that  $\op[\varphi_j]\op[a] = \op[a]\op[\varphi_j]$ holds in this case. Therefore, one can apply Lemma~\ref{2.6} b) and Lemma~\ref{3.2} a) and avoid double integrals.

  The case of $x$-independent symbols was already shown in \cite{Barraza-Denk-Hernandez_Nau16}, Theorem~3.17. However, the proof in \cite{Barraza-Denk-Hernandez_Nau16} was based on the connection between the symbols on $\Z^n$ and the symbols on $\R^n$. In fact, every symbol on $\Z^n$ can be extended to a symbol on $\R^n$ belonging to the same symbol class (see  \cite{Ruzhansky-Turunen10}, Theorem~II.4.5.3, and  the transference principle in \cite{Hytoenen-van_Neerven-Veraar-Weis16}, Section~5.7). In the present paper, we formulated a proof which is independent of this fact. Therefore, the present proof might serve as a basis for generalizations to more general groups instead of $\T^n$.

  b) As the symbols considered here are of restricted smoothness, we do not obtain continuity in the full scale of Besov spaces. That the range of continuity is restricted becomes obvious if we take a symbol $a(x,\bk)=b(x)$ independent of $\bk$, where $b\in C^r(\T^n,L(E))$. In this case,
  $a\in S^{0,\rho,r}$ and
   \[
(\op[a]f)(x)=\sum_{\mathbf{k}\in\mathbb{Z}^{n}}\int_{\mathbb{T}^{n}%
}e^{i\mathbf{k}\cdot y}b(x)f(x-y)\dbar y=b(x)f(x) \quad (x\in\T^n).
\]
Taking $f(x)$ as a constant function, we see that in general $\op[a]f\in C^r(\T^n,E)$ cannot be improved.
\end{remark}

\def\cprime{$'$}

\end{document}